\newlist{enumarabic}{enumerate}{1}
\setlist[enumarabic]{font=\normalfont,label=(\arabic*),leftmargin=0.3in}
\newlist{enumroman}{enumerate}{1}
\setlist[enumroman]{font=\normalfont,label=(\roman*),leftmargin=0.3in}
\theoremstyle{plain}
\newtheorem{theorem}{Theorem}[section]
\newtheorem{lemma}[theorem]{Lemma}
\theoremstyle{definition}
\newtheorem{remark}[theorem]{Remark}
\theoremstyle{remark}
\newtheorem*{acknowledgements}{Acknowledgements}
\numberwithin{equation}{section}
\let\newterm\emph
\def\arxiv#1{\href{http://arxiv.org/abs/#1}{\texttt{arXiv:#1}}}
\def\cf{\emph{cf.}}
\let\epsilon\varepsilon
\let\emptyset\varnothing
\let\setminus\smallsetminus
\DeclareMathAlphabet\mathbfit{OML}{cmm}{b}{it}
\def\N{\mathbb{N}}
\def\Z{\mathbb{Z}}
\def\R{\mathbb{R}}
\def\kk{\Z}
\def\deg#1{|#1|}
\DeclareMathOperator{\Tor}{Tor}
\DeclareMathOperator{\supp}{supp}
\let\shuffle\nabla
\def\SRC#1{\kk\langle#1\rangle}
\def\restr#1#2{#1_{|#2}}
\def\ZK{\mathcal{Z}}
\def\RZK{\ZK_{\R}}
\def\hZK{\hat\ZK}
\def\RhZK{\hZK_{\R}}
\def\WK{\mathcal{S}}
\def\hB{\hat B}
\def\Hc{H_{c}}
\def\CB{C\,}
\def\CU{\mathcal{C}\,}
\def\AC{\mathbfit{A}}
\def\BC{\mathbfit{B}}
\def\KC{\mathbfit{K}}
\def\LC{\mathbfit{L}}
\def\RC{\mathbfit{R}}
\def\AR{A}
\def\BR{B}
\def\LR{L}
\def\RR{R}
\def\BCt{\boldsymbol{\tilde B}}
\def\PsiR{\Psi_{\R}}
\begin{document}

\title{Dga models for moment-angle complexes}
\author{Matthias Franz}
\thanks{The author was supported by an NSERC Discovery Grant.}
\address{Department of Mathematics, University of Western Ontario,
  London, Ont.\ N6A\;5B7, Canada}
\email{mfranz@uwo.ca}

\subjclass[2020]{Primary 57S12; secondary 16E45, 55N10}

\begin{abstract}
  A dga model for the integral singular cochains on a moment-angle complex
  is given by the twisted tensor product of the corresponding Stanley--Reisner ring
  and an exterior algebra. We present a short proof of this fact and extend it
  to real moment-angle complexes. We also compare various descriptions
  of the cohomology rings of these spaces, including one stated without proof
  by Gitler and López de Medrano.
\end{abstract}

\maketitle

\section{Introduction}

Let \(\Sigma\) be a simplicial complex on the set~\([m]=\{1,\dots,m\}\),
containing the empty simplex~\(\emptyset\) and possibly having ghost vertices, and let
\begin{equation}
  \ZK(\Sigma) = \ZK_{\Sigma}(D^{2},S^{1}) = \bigcup_{\sigma\in\Sigma} (D^{2},S^{1})^{\sigma}
  \subset (D^{2})^{m}
\end{equation}
be the associated moment-angle complex, where
\begin{equation}
  (D^{2},S^{1})^{\sigma}
  = \bigl\{\,(z_{1},\dots,z_{m})\in (D^{2})^{m} \bigm| \text{\(z_{i}\in S^{1}\) if \(i\notin\sigma\)}\,\bigr\}.
\end{equation}
Moment-angle complexes play a central role in toric topology, see~\cite{BuchstaberPanov:2015}.
Replacing \((D^{2},S^{1})\) by~\((D^{n},S^{n-1})\) for any~\(n\ge1\) gives generalized moment-angle complexes.
Taking arbitrary CW~pairs leads to polyhedral products, which have gained a lot of attention in homotopy theory recently,
see \cite{BahriEtAl:2020} for a survey.

The moment-angle complex~\(\ZK(\Sigma)\) is homotopy-equivalent to the complement
of a complex coordinate subspace arrangement, which is a smooth toric variety.
The integral cohomology ring of~\(\ZK(\Sigma)\) was computed by the author~\cite[Sec.~4]{Franz:2003a}
(using the language of toric varieties)
and shortly afterwards by Baskakov--Buchstaber--Panov~\cite{BaskakovBuchstaberPanov:2004}.\footnote{%
  The argument appearing in~\cite[Thm.~7.7]{BuchstaberPanov:2002} and earlier publications by the same authors is incorrect,
  compare~\cite[Sec.~1]{Franz:2018b}.}
The result is an isomorphism of graded rings
\begin{equation}
  \label{eq:iso-HZK-Tor}
  H^{*}(\ZK(\Sigma)) = \Tor_{\RC}(\Z,\Z[\Sigma]),
\end{equation}
where \(\RC=\Z[t_{1},\dots,t_{m}]\) and \(\Z[\Sigma]\) is the Stanley--Reisner ring of~\(\Sigma\)
with generators~\(t_{1}\),~\dots,~\(t_{m}\) of degree~\(2\).
Taking the Koszul resolution of~\(\Z\) over~\(\RC\),
one can describe the ring~\eqref{eq:iso-HZK-Tor} as the cohomology of the commutative
differential graded algebra (cdga)
\begin{equation}
  \label{eq:def-A}
  \AC(\Sigma) = \kk[\Sigma] \otimes \bigwedge(s_{1},\dots,s_{m}),
  \qquad
  d\,s_{i} = t_{i},
  \quad
  d\,t_{i} =0
\end{equation}
for~\(i\in[m]\), where each~\(s_{i}\) has degree~\(1\).
Dividing out out all squares~\(t_{i}^{2}\) as well as all terms~\(s_{i}\,t_{i}\),
one obtains a quasi-isomorphic dga~\(\BC(\Sigma)\). 
As a cdga, \(\BC(\Sigma)\) is generated by the~\(s_{i}\) and~\(t_{i}=d\,s_{i}\) and has the relations \(s_{i}\,t_{i}=t_{i}\,t_{i}=0\) for~\(i\in[m]\)
as well as \(t_{i_{1}}\cdots t_{i_{k}}=0\) whenever \(\{i_{1},\dots,i_{k}\}\notin\Sigma\).

\begin{theorem}
  \label{thm:main}
  The singular cochain algebra~\(C^{*}(\ZK(\Sigma))\)
  is quasi-iso\-mor\-phic to the dgas~\(\AC(\Sigma)\) and~\(\BC(\Sigma)\),
  naturally with respect to inclusions of subcomplexes.
\end{theorem}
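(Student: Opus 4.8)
The plan is to realise \(\AC(\Sigma)\) as a twisted tensor product modelling the Borel fibration
\[
  \ZK(\Sigma) \hookrightarrow ET^{m}\times_{T^{m}}\ZK(\Sigma) \longrightarrow BT^{m},
\]
whose fibre is \(\ZK(\Sigma)\), total space the Davis--Januszkiewicz space \(DJ(\Sigma)\), and base \(BT^{m}=(\C P^{\infty})^{m}\). On cohomology this is the fibration underlying the isomorphism~\eqref{eq:iso-HZK-Tor}: the base contributes \(\RC=H^{*}(BT^{m})\), the total space contributes \(H^{*}(DJ(\Sigma))=\Z[\Sigma]\), and the Koszul resolution of~\(\Z\) over~\(\RC\) supplies the exterior factor. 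First I would fix multiplicative cochain models: a zigzag of dga quasi-isomorphisms \(C^{*}(BT^{m})\simeq\RC\) (integral formality of \((\C P^{\infty})^{m}\)) and, compatibly over~\(\RC\) and naturally in~\(\Sigma\), a zigzag \(C^{*}(DJ(\Sigma))\simeq\Z[\Sigma]\) (integral formality of the Davis--Januszkiewicz space).

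With these in hand, I would invoke Brown's twisted-tensor-product theorem for the fibration, using the Koszul twisting cochain dual to the transgression \(s_{i}\mapsto t_{i}\) of the universal torus bundle. This yields a multiplicative quasi-isomorphism
\[
  C^{*}(\ZK(\Sigma)) \;\simeq\; \Z[\Sigma]\otimes_{\RC}\bigl(\RC\otimes{\textstyle\bigwedge}(s_{1},\dots,s_{m})\bigr) \;=\; \AC(\Sigma),
\]
whose right-hand differential is exactly \(d\,s_{i}=t_{i}\). Naturality with respect to inclusions \(\Sigma'\subseteq\Sigma\) follows because the chosen models for \(BT^{m}\) and \(DJ(\Sigma)\) are natural and the twisting cochain is independent of~\(\Sigma\); the quasi-isomorphism to \(\BC(\Sigma)\) is then the explicit quotient already recorded above.

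The hard part is multiplicativity with integer coefficients. Singular cochains are only homotopy commutative, so each identification above is a zigzag of dgas rather than a single map, and Brown's construction must be executed so that the product is respected up to coherent homotopy. The genuine input is the integral formality of \(DJ(\Sigma)\) together with the collapse of the Eilenberg--Moore spectral sequence \(\Tor_{\RC}(\Z,\Z[\Sigma])\Rightarrow H^{*}(\ZK(\Sigma))\); one must verify that no torsion obstruction intervenes. An alternative that avoids citing formality as a black box is to write \(\ZK(\Sigma)=\operatorname*{colim}_{\sigma\in\Sigma}(D^{2},S^{1})^{\sigma}\) as a homotopy colimit over the face poset, apply the lax monoidal functor~\(C^{*}\) to obtain a homotopy limit, and identify it with \(\AC(\Sigma)\) face by face using \(\Z[\Sigma]=\lim_{\sigma}\Z[t_{i}:i\in\sigma]\); there the obstacle migrates to computing the homotopy limit and extracting the twisting \(d\,s_{i}=t_{i}\) from the gluing data.
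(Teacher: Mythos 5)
Your proposal has a genuine gap, and it sits exactly where you flag it: the claim that Brown's twisted-tensor-product theorem ``yields a multiplicative quasi-isomorphism'' between \(C^{*}(\ZK(\Sigma))\) and \(\AC(\Sigma)\). Brown's theorem and the Eilenberg--Moore machinery produce quasi-isomorphisms of \emph{complexes}; the Koszul complex \(\AC(\Sigma)\) carries a natural cdga structure, but nothing in the twisted-tensor-product construction makes the comparison map respect products, since \(C^{*}(-)\) is only homotopy commutative and the twisting destroys even strict associativity of naive candidate maps. Acknowledging that ``Brown's construction must be executed so that the product is respected up to coherent homotopy'' and that ``one must verify that no torsion obstruction intervenes'' is naming the entire difficulty, not discharging it: integral formality of \(DJ(\Sigma)\) plus collapse of the Eilenberg--Moore spectral sequence gives the \emph{additive} isomorphism~\eqref{eq:iso-HZK-Tor} (and at best a ring isomorphism on \(E_{\infty}\), with extension and comparison issues), which is strictly weaker than a dga quasi-isomorphism. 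This is precisely the trap recorded in the paper's footnote: the argument of~\cite[Thm.~7.7]{BuchstaberPanov:2002} along these Eilenberg--Moore lines is incorrect. Your route \emph{can} be completed --- this is essentially \cite[Prop.~6.1]{Franz:2018b}, which requires homotopy Gerstenhaber structures on the Koszul complex to control multiplicativity --- but as written your sketch contains none of that input, and importing it would defeat the stated aim of a short proof. Your fallback via homotopy colimits relocates rather than removes the problem, as you yourself note: rectifying the diagram of cochain algebras and extracting the twisting \(d\,s_{i}=t_{i}\) from the gluing data is again the same multiplicative comparison.

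The paper's actual proof sidesteps all of this by working on the \emph{homology} side with coalgebras, where multiplicativity comes for free. One defines dgcs \(\KC(\Sigma)\) and \(\LC(\Sigma)\) whose duals are \(\AC(\Sigma)\) and \(\BC(\Sigma)\), and an explicit map \(\Psi(\Sigma)\colon\LC(\Sigma)\to C(\ZK(\Sigma))\) sending the generators to concrete normalized singular simplices \(x\subset D^{2}\) and \(y\subset S^{1}\), extended over products of vertices by the shuffle map \(\shuffle\). The key structural point is that the shuffle map is a morphism of dgcs (unlike the Alexander--Whitney map, which is not cocommutative), so \(\Psi(\Sigma)\) is a dgc map on the nose; normalization kills the degenerate faces \(x(01)\), \(x(02)\), making the diagonal of \(x\) primitive. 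Quasi-isomorphism is then elementary: Künneth for a single maximal simplex, Mayer--Vietoris and the five lemma for the induction over \(\Sigma\), and the universal coefficient theorem dualizes everything to the dga statement of the theorem. If you want to salvage your plan, the honest options are to cite \cite[Prop.~6.1]{Franz:2018b} wholesale for the multiplicative Eilenberg--Moore comparison, or to switch to the coalgebra-level argument, where the only multiplicative input needed --- that \(\shuffle\) is a dgc map --- is classical.
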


Recall that a dga~\(A\) is called an (integral) \newterm{dga model} for a space~\(X\) if \(A\) can be connected to~\(C^{*}(X)\)
via a zigzag of dga quasi-isomorphisms. In this language,
\Cref{thm:main} asserts that both~\(\AC(\Sigma)\) and~\(\BC(\Sigma)\) are dga models for~\(\ZK(\Sigma)\).

That \(C^{*}(\ZK(\Sigma))\) and~\(\AC(\Sigma)\) are quasi-isomorphic
is already implicit in the author's computation of~\(H^{*}(\ZK(\Sigma))\),
see~\cite[Sec.~4]{Franz:2003a}.
A different proof has recently been obtained by the author as a byproduct
of his work on the cohomology rings of partial quotients
of moment-angle complexes \cite[Prop.~6.1]{Franz:2018b}.
As remarked there, this result answers a question
posed by Berglund~\cite[Question~5]{Berglund:2010},
which was exactly whether \(\AC(\Sigma)\) is a dga model for~\(\ZK(\Sigma)\).
The aim of the present note is to give a much shorter proof for this model.
Like Baskakov--Buchstaber--Panov's calculation it is based on the dga~\(\BC(\Sigma)\).
The rational versions of~\(\AC(\Sigma)\) and~\(\BC(\Sigma)\) are
(analogously defined) cdga models for the polynomial differential forms
on~\(\ZK(\Sigma)\) by a result of Panov--Ray~\cite[Thm.~6.2]{PanovRay:2008}.

The proof of \Cref{thm:main} appears in the following section
and an adaptation to real moment-angle complexes in \Cref{sec:real}.
In the final section we relate the resulting cup product formulas for real and complex
moment-angle complexes with others appearing in the literature.
We in particular provide a proof that has been missing so far for a product formula
stated by Gitler and López de Medrano~\cite{GitlerLopezDeMedrano:2013}.

\begin{acknowledgements}
  I thank Don Stanley for his questions about dga models and Santiago López de Medrano for stimulating discussions.
  I also thank the organizers of the ``Thematic Program on Toric Topology and Polyhedral Products''
  at the Fields Institute for creating an environment favourable to research
  and an anonymous referee for comments that helped to improve the presentation of the paper.
\end{acknowledgements}

\section{Proof of Theorem~\ref{thm:main}}

We will obtain \Cref{thm:main} by dualizing the analogous homological result.
To state the latter, we need to introduce some terminology.
As already done in \Cref{thm:main}, we write \(C(-)\) and~\(C^{*}(-)\)
for \emph{normalized} singular (co)chains with integral coefficients.

Recall that the normalized singular chain complex of a space~\(X\)
is obtained from the usual non-normalized one by dividing out the subcomplex
of degenerate simplices. A singular \(n\)-simplex is called degenerated
if it factors through an \((n-1)\)-dimensional one via a map~\(\Delta^{n}\to\Delta^{n-1}\)
between standard simplices that in barycentric coordinates is given
by~\((t_{0},\dots,t_{n})\mapsto (t_{0},\dots,t_{i}+t_{i+1},\dots,t_{n})\)
for some~\(0\le i<n\). Projecting non-normalized to normalized singular chains
is a homotopy equivalence, compare~\cite[Sec.~VIII.6]{MacLane:1967}.
The normalized singular cochain complex~\(C^{*}(X)\) is the dual of~\(C(X)\) with differential
\begin{equation}
  \label{eq:def-d-dual}
  (d\,\gamma)(x) = - (-1)^{\deg{\gamma}}\,\gamma(d\,x)
\end{equation}
for~\(\gamma\in C^{n}(X)\) and a singular \((n+1)\)-simplex~\(x\).

The chain complex~\(C(X)\) is a differential graded coalgebra (dgc)
with diagonal and augmentation given by
\begin{equation}
  \Delta x = \sum_{k=0}^{n} x(0 \dots k) \otimes x(k \dots n)
  \qquad\text{and}\qquad
  \epsilon(x) = \begin{cases}
    1 & \text{if \(n=0\),} \\
    0 & \text{otherwise}
  \end{cases}
\end{equation}
for an \(n\)-simplex~\(x\). Here \(x(k_{1}\dots k_{2})\) denotes the face of~\(x\) with vertices~\(k_{1}\),~\dots,~\(k_{2}\).

We also recall part of the Eilenberg--Zilber theorem, compare~\cite[Sec.~17]{EilenbergMoore:1966}.
Given two spaces~\(X\) and~\(Y\), the shuffle map
\begin{equation}
  \shuffle\colon C(X)\otimes C(Y) \to C(X\times Y)
\end{equation}
is a homotopy equivalences of complexes, natural in~\(X\) and~\(Y\).
It moreover is associative and a morphism of dgcs.
Hence for any spaces~\(X_{1}\),~\dots,~\(X_{m}\) we have a natural quasi-isomorphism of dgcs
\begin{equation}
  C(X_{1})\otimes\dots\otimes C(X_{m}) \to C\bigl(X_{1}\times\dots\times X_{m}\bigr),
\end{equation}
again denoted by~\(\shuffle\).

Let \(\SRC{\Sigma}\) be the Stanley--Reisner coalgebra of~\(\Sigma\) dual to~\(\kk[\Sigma]\),
\cf~\cite[Sec.~8.2]{BuchstaberPanov:2015}.
The  canonical basis for~\(\SRC{\Sigma}\), considered as a \(\Z\)-module, are the monomials~\(u_{\alpha}\)
indexed by allowed multi-indices~\(\alpha\in\N^{m}\).
A multi-index~\(\alpha\) is \newterm{allowed} if it is supported on some simplex in~\(\Sigma\), that is, if
\begin{equation}
  \supp\alpha \coloneqq \{\, i\in[m] \mid \alpha_{i} > 0\,\} \in \Sigma.
\end{equation}
The degree of~\(u_{\alpha}\) is \(2(\alpha_{1}+\dots+\alpha_{m})\).
The structure maps are given by
\begin{equation}
  \Delta u_{\alpha} = \sum_{\beta+\gamma=\alpha} u_{\beta}\otimes u_{\gamma},
  \qquad
  \epsilon(u_{\alpha}) =
  \begin{cases}
    1 & \text{if \(\alpha=0\),} \\
    0 & \text{otherwise.}
  \end{cases}
\end{equation}

We consider the tensor product of graded coalgebras
\begin{equation}
  \KC(\Sigma) =  \SRC{\Sigma} \otimes \bigwedge(v_{1},\dots,v_{m}),
\end{equation}
where each~\(v_{i}\) is primitive of degree~\(1\). We turn~\(\KC(\Sigma)\) into a dgc by defining
\begin{equation}
  d(u_{\alpha}\otimes v_{\tau}) = \sum_{\alpha_{i}>0} u_{\alpha-i}\otimes v_{i}\wedge v_{\tau}
\end{equation}
for allowed multi-indices~\(\alpha\in\N^{m}\) and~\(\tau\subset[m]\).
Here we have written \(\alpha-i\) for the multi-index
that is obtained from~\(\alpha\) by decreasing the \(i\)-th component by~\(1\)
as well as \(v_{\tau}=v_{i_{1}}\wedge\dots\wedge v_{i_{k}}\) if~\(\tau=\{i_{1}<\dots<i_{k}\}\).
For~\(\sigma\in\Sigma\) we also write \(u_{\sigma}=u_{\alpha}\)
where \(\alpha\) is the indicator function of~\(\sigma\subset[m]\),
\begin{equation}
  \alpha_{i} =
  \begin{cases}
    1 & \text{if \(i\in\sigma\),} \\
    0 & \text{if \(i\notin\sigma\),}
  \end{cases}
\end{equation}
and we use the abbreviation~\(u_{\emptyset}=v_{\emptyset}=u_{\emptyset}\otimes v_{\emptyset}=1\).

Let \(\LC(\Sigma)\) be the sub-dgc of~\(\KC(\Sigma)\) spanned by all elements~\(u_{\sigma}\otimes v_{\tau}\)
with disjoint subsets~\(\sigma\in\Sigma\) and~\(\tau\subset[m]\).
The dual of~\(\KC(\Sigma)\) is the dga~\(\AC(\Sigma)\), and that of~\(\LC(\Sigma)\) is \(\BC(\Sigma)\).

\begin{theorem}
  \label{thm:main-hom}
  The dgcs \(C(\ZK(\Sigma))\),~\(\KC(\Sigma)\) and~\(\LC(\Sigma)\) are quasi-isomorphic,
  naturally with respect to inclusions of subcomplexes.
\end{theorem}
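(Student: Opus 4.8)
The plan is to build a single natural morphism of dgcs out of a one-factor comparison map and the shuffle map, and then prove that it is a quasi-isomorphism by Mayer--Vietoris induction over~\(\Sigma\). First I would treat one coordinate. Write \(W=\langle 1,v,u\rangle\) for the one-vertex piece of~\(\KC\), so that \(\deg v=1\), \(\deg u=2\), \(d\,u=v\), and both \(v\) and \(u\) are primitive, and let \(W'=\langle 1,v\rangle\) be the sub-dgc modelling the circle. I would define a dgc map \(\phi\colon W\to C(D^{2})\) by sending \(1\) to the basepoint~\(*\in S^{1}\), \(v\) to a singular \(1\)-simplex~\(\ell\) running once around~\(S^{1}\), and \(u\) to the prism \(2\)-chain obtained by coning~\(\ell\) to~\(*\) along straight lines. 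The point is that every edge of this prism except the one carrying~\(\ell\) is constant at~\(*\), hence degenerate and zero in the normalized complex; consequently the diagonal of~\(\phi(u)\) has no middle term and equals \(*\otimes\phi(u)+\phi(u)\otimes *\), so \(\phi\) is a \emph{strict} morphism of dgcs. By construction \(\phi\) restricts on~\(W'\) to the standard model \(W'\to C(S^{1})\subset C(D^{2})\), and both \(\phi\) and \(\phi|_{W'}\) are quasi-isomorphisms onto the (acyclic, respectively circle) targets.

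Next I would assemble these maps. Tensoring the~\(\phi_{i}\) and composing with~\(\shuffle\) gives a dgc quasi-isomorphism \(\bigotimes_{i}W_{i}\to C\bigl((D^{2})^{m}\bigr)\). The combinatorial key is that, inside~\(\bigotimes_{i}W_{i}\), the sub-dgc \(\LC(\Sigma)\) is exactly \(\sum_{\sigma\in\Sigma}\bigl(\bigotimes_{i\in\sigma}W_{i}\otimes\bigotimes_{i\notin\sigma}W_{i}'\bigr)\): a basis tensor carrying \(u\)-factors on \(\{i:w_{i}=u_{i}\}\) and \(v\)-factors on \(\{i:w_{i}=v_{i}\}\) lies in this sum precisely when the former set belongs to~\(\Sigma\), which matches the defining condition \(\sigma\in\Sigma\), \(\sigma\cap\tau=\emptyset\) for~\(\LC(\Sigma)\). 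Since \(\shuffle\circ\bigotimes_{i}\phi_{i}\) carries the \(\sigma\)-summand into \(C\bigl((D^{2},S^{1})^{\sigma}\bigr)\subset C(\ZK(\Sigma))\), restriction yields a natural dgc map \(\Phi\colon\LC(\Sigma)\to C(\ZK(\Sigma))\).

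Finally I would prove that \(\Phi\) is a quasi-isomorphism by induction on the number of faces of~\(\Sigma\), taking full simplices as base cases. For a full simplex~\(\bar\sigma_{0}\) the space \(\ZK(\bar\sigma_{0})\) is a genuine product of disks and circles, \(\LC(\bar\sigma_{0})=\bigotimes_{i\in\sigma_{0}}W_{i}\otimes\bigotimes_{i\notin\sigma_{0}}W_{i}'\), and \(\Phi\) is \(\shuffle\) precomposed with a tensor product of one-factor quasi-isomorphisms of complexes of free abelian groups, hence a quasi-isomorphism by Eilenberg--Zilber and the Künneth theorem. In general I would pick a maximal face~\(\sigma_{0}\), set \(\Sigma'=\Sigma\setminus\{\sigma_{0}\}\), and use \(\Sigma=\Sigma'\cup\bar\sigma_{0}\) with \(\Sigma'\cap\bar\sigma_{0}=\partial\sigma_{0}\). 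Both \(\ZK(-)\) and \(\LC(-)\) turn this into a Mayer--Vietoris short exact sequence---on the space side because the pieces are CW-subcomplexes, so the cover is excisive, and on the algebraic side directly from the monomial basis---and \(\Phi\) maps one to the other. As \(\Sigma'\), \(\partial\sigma_{0}\) have strictly fewer faces and \(\bar\sigma_{0}\) is a base case, the five lemma propagates the quasi-isomorphism to~\(\Sigma\). The very same induction, now fed only the fact that the one-vertex inclusion \(\LC_{i}\hookrightarrow\KC_{i}\) is a quasi-isomorphism, shows that \(\LC(\Sigma)\hookrightarrow\KC(\Sigma)\) is a quasi-isomorphism; composing the two chains of maps gives the three quasi-isomorphic dgcs, naturally in~\(\Sigma\).

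The step I expect to be the main obstacle is the very first one: making the one-factor comparison~\(\phi\) a strict coalgebra map and not merely a chain map. This is what forces the particular based cone filling of~\(\ell\), and it is exactly the vanishing of the degenerate middle faces in the normalized complex that makes the diagonal of~\(\phi(u)\) come out primitive, matching the primitivity of~\(u\) in~\(\KC\). Everything downstream---the identification of~\(\LC(\Sigma)\) as a sum of tensor blocks and the Mayer--Vietoris bookkeeping---is then formal.
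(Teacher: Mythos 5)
Your proposal is correct and takes essentially the same route as the paper's proof: the same one-coordinate comparison (a singular \(2\)-simplex coning the loop to the basepoint, with normalization killing the degenerate faces so that the map is a strict dgc morphism), assembly over simplices via the shuffle map and the Künneth theorem, a Mayer--Vietoris/five-lemma induction over subcomplexes of \(\Sigma\), and the same inductive argument reapplied to show that \(\LC(\Sigma)\hookrightarrow\KC(\Sigma)\) is a quasi-isomorphism. The only slips are cosmetic and harmless: your \(W=\langle 1,v,u\rangle\) is the one-vertex piece of \(\LC\), not of \(\KC\) (whose one-vertex piece contains all powers of \(u\)), and the based cone on \(\ell\) is a single \(2\)-simplex rather than a prism.
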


The proof is given in the remainder of this section.
Applying the universal coefficient theorem for cohomology then establishes \Cref{thm:main}.

\medbreak

The following two observations are immediate.
We write \(\restr{\Sigma}{i}\) for the restriction of~\(\Sigma\) to the single vertex~\(i\in[m]\).
It contains either the empty simplex only or additionally the \(0\)-simplex~\(\{i\}\).

\begin{lemma}
  \label{thm:KL-tensor}
  For any~\(\sigma\in\Sigma\) there are canonical isomorphisms of dgcs
  \begin{equation*}
    \KC(\sigma) = \bigotimes_{i=1}^{m} \KC(\restr{\sigma}{i}),
    \qquad
    \LC(\sigma) = \bigotimes_{i=1}^{m} \LC(\restr{\sigma}{i}).
  \end{equation*}
\end{lemma}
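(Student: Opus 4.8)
The plan is to realize both isomorphisms vertex by vertex, reducing everything to the behaviour of the two tensor factors $\SRC{\Sigma}$ and $\bigwedge(v_1,\dots,v_m)$ under restriction. First I would treat the coalgebra $\SRC{\sigma}$, where $\sigma$ is regarded as the simplicial complex of all its faces. A multi-index $\alpha\in\N^m$ is allowed for $\sigma$ exactly when $\supp\alpha\subseteq\sigma$, i.e.\ when $\alpha_i=0$ for $i\notin\sigma$ while the remaining $\alpha_i$ are unrestricted; thus the canonical basis of $\SRC{\sigma}$ consists of the componentwise products of the single-vertex monomials. Since $\Delta u_\alpha=\sum_{\beta+\gamma=\alpha}u_\beta\otimes u_\gamma$ and $\epsilon$ both split componentwise, this gives a canonical isomorphism of graded coalgebras $\SRC{\sigma}=\bigotimes_{i=1}^m\SRC{\restr{\sigma}{i}}$. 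Tensoring with the evident splitting $\bigwedge(v_1,\dots,v_m)=\bigotimes_{i=1}^m\bigwedge(v_i)$ of the exterior coalgebra on the primitive generators $v_i$ and reordering the factors by the Koszul rule produces a canonical isomorphism of graded coalgebras $\KC(\sigma)=\bigotimes_{i=1}^m\KC(\restr{\sigma}{i})$, under which $u_\alpha\otimes v_\tau$ corresponds to $\bigotimes_{i=1}^m x_i$, where $x_i\in\KC(\restr{\sigma}{i})$ carries the $u$-exponent $\alpha_i$ at vertex $i$ together with the $v$-part $v_{\tau\cap\{i\}}$.

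Next I would check that this identification is compatible with the differentials. The key observation is that in
\[
  d(u_\alpha\otimes v_\tau)=\sum_{\alpha_i>0}u_{\alpha-i}\otimes v_i\wedge v_\tau
\]
every summand with $i\in\tau$ vanishes, because then $v_i\wedge v_\tau=0$. Hence $d$ modifies only one tensor factor at a time, applying the single-vertex differential to $x_i$ precisely when $i\notin\tau$ and $\alpha_i>0$; this is exactly the tensor-product differential on $\bigotimes_{i=1}^m\KC(\restr{\sigma}{i})$. It remains to match the signs, which is the one point requiring a moment's care. Straightening $v_i\wedge v_\tau$ into the ordered monomial contributes the sign $(-1)^{|\{j\in\tau\,:\,j<i\}|}$, while the Koszul sign of the $i$-th term of the tensor differential is $(-1)^{\sum_{j<i}|x_j|}$; since $|x_j|=2\alpha_j+|\tau\cap\{j\}|$ has the same parity as $|\tau\cap\{j\}|$, the two exponents agree modulo $2$. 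This establishes the first isomorphism as one of dgcs.

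For the second isomorphism I would simply restrict the first. By definition $\LC(\sigma)\subseteq\KC(\sigma)$ is spanned by the elements $u_\rho\otimes v_\tau$ with $\rho\in\sigma$ and $\tau\subseteq[m]$ disjoint from $\rho$; under the factorization above this says that each factor $x_i$ has a square-free $u$-part and disjoint $u$- and $v$-supports, i.e.\ $x_i\in\LC(\restr{\sigma}{i})$ for all $i$. Concretely $\LC(\restr{\sigma}{i})$ is spanned by $1$, $u_i$, $v_i$ when $i\in\sigma$ and by $1$, $v_i$ when $i\notin\sigma$, so the graded-coalgebra isomorphism carries the spanning set of $\LC(\sigma)$ bijectively onto a basis of $\bigotimes_{i=1}^m\LC(\restr{\sigma}{i})$. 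As it already commutes with $d$, it restricts to the desired dgc isomorphism $\LC(\sigma)=\bigotimes_{i=1}^m\LC(\restr{\sigma}{i})$. The only genuine obstacle in the whole argument is the sign comparison of the second paragraph; the rest is unwinding the definitions, which is why the statement is flagged as immediate.
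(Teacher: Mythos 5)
Your proof is correct: the componentwise splitting of the Stanley--Reisner coalgebra and the exterior coalgebra, the observation that the terms with \(i\in\tau\) vanish, and the parity match between the straightening sign \((-1)^{|\{j\in\tau\,:\,j<i\}|}\) and the Koszul sign \((-1)^{\sum_{j<i}\deg{x_{j}}}\) (since \(\deg{x_{j}}\equiv|\tau\cap\{j\}| \bmod 2\)) all check out, as does the restriction to \(\LC\). The paper states this lemma as immediate and gives no proof, and your direct verification is exactly the intended unwinding of the definitions.
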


\begin{lemma}
  \label{thm:KL-MV}
  Let \(\Sigma_{1}\),~\(\Sigma_{2}\) be subcomplexes of~\(\Sigma\).
  There are short exact sequences
  \begin{gather*}
    0 \longrightarrow \KC(\Sigma_{1}\cap\Sigma_{2})
    \longrightarrow \KC(\Sigma_{1})\oplus \KC(\Sigma_{2})
    \longrightarrow \KC(\Sigma_{1}\cup\Sigma_{2})
    \longrightarrow 0,
    \\
    0 \longrightarrow \LC(\Sigma_{1}\cap\Sigma_{2})
    \longrightarrow \LC(\Sigma_{1})\oplus \LC(\Sigma_{2})
    \longrightarrow \LC(\Sigma_{1}\cup\Sigma_{2})
    \longrightarrow 0.
  \end{gather*}
\end{lemma}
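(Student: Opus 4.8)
The plan is to reduce both sequences to the elementary short exact sequence attached to a pair of submodules that are spanned by subsets of one common basis. The starting observation is that $\KC(\Sigma)$ is a free $\Z$-module with canonical basis the monomials $u_{\alpha}\otimes v_{\tau}$ indexed by pairs $(\alpha,\tau)$ with $\supp\alpha\in\Sigma$ and $\tau\subseteq[m]$, while $\LC(\Sigma)$ is free on those $u_{\sigma}\otimes v_{\tau}$ with $\sigma\in\Sigma$, $\tau\subseteq[m]$ and $\sigma\cap\tau=\emptyset$. In either case the dependence on~$\Sigma$ enters \emph{only} through the condition $\supp\alpha\in\Sigma$ (respectively $\sigma\in\Sigma$) that selects which basis vectors occur.

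First I would record that, for a subcomplex $\Sigma'\subseteq\Sigma$, the submodule $\KC(\Sigma')\subseteq\KC(\Sigma)$ is exactly the span of those basis vectors with $\supp\alpha\in\Sigma'$, and that this span is a sub-dgc. This is the one place where the simplicial-complex hypothesis is used: since $d(u_{\alpha}\otimes v_{\tau})=\sum_{\alpha_{i}>0}u_{\alpha-i}\otimes v_{i}\wedge v_{\tau}$ and $\Delta u_{\alpha}=\sum_{\beta+\gamma=\alpha}u_{\beta}\otimes u_{\gamma}$ only produce monomials whose support is contained in $\supp\alpha$, the downward-closedness of~$\Sigma'$ keeps the differential and the diagonal inside the span. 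The same holds for~$\LC$.

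Next I would translate the set-theoretic identities $\sigma\in\Sigma_{1}\cap\Sigma_{2}\iff\sigma\in\Sigma_{1}\text{ and }\sigma\in\Sigma_{2}$ and $\sigma\in\Sigma_{1}\cup\Sigma_{2}\iff\sigma\in\Sigma_{1}\text{ or }\sigma\in\Sigma_{2}$ into the module identities
\[
  \KC(\Sigma_{1}\cap\Sigma_{2})=\KC(\Sigma_{1})\cap\KC(\Sigma_{2}),\qquad
  \KC(\Sigma_{1}\cup\Sigma_{2})=\KC(\Sigma_{1})+\KC(\Sigma_{2})
\]
inside $\KC(\Sigma)$. These are immediate from the fact that all four submodules are spanned by subsets of the single monomial basis, so that intersection and sum of the submodules correspond to intersection and union of the index sets. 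The claimed sequence is then the standard Mayer--Vietoris sequence of submodules, with maps $x\mapsto(x,x)$ and $(x,y)\mapsto x-y$, exact by inclusion--exclusion. Both maps are chain maps, being assembled from the subcomplex inclusions $\KC(\Sigma_{1}\cap\Sigma_{2})\hookrightarrow\KC(\Sigma_{i})$ and $\KC(\Sigma_{i})\hookrightarrow\KC(\Sigma_{1}\cup\Sigma_{2})$, each of which is a morphism of dgcs. The argument for~$\LC$ is verbatim the same, carrying along the disjointness condition $\sigma\cap\tau=\emptyset$, which does not interact with the choice of the~$\Sigma_{i}$.

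The only step that needs genuine verification---and the reason the statement is nevertheless \emph{immediate}---is the pair of module identities above; everything else is formal. There is no real obstacle, because the bases are \emph{monomial}: membership of a basis vector in $\KC(\Sigma')$ is decided solely by whether its support lies in~$\Sigma'$. This is precisely the feature that lets intersection and union of the~$\Sigma_{i}$ pass to intersection and sum of the corresponding spans, so that the Mayer--Vietoris sequences drop out with none of the subdivision or chain-approximation that the topological side $C(\ZK(\Sigma))$ would require.
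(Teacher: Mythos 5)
Your proof is correct and is precisely the argument the paper has in mind: the paper states this lemma without proof (``The following two observations are immediate''), and your verification via the common monomial basis --- so that \(\KC(\Sigma_{1}\cap\Sigma_{2})=\KC(\Sigma_{1})\cap\KC(\Sigma_{2})\) and \(\KC(\Sigma_{1}\cup\Sigma_{2})=\KC(\Sigma_{1})+\KC(\Sigma_{2})\) inside \(\KC(\Sigma)\), followed by the standard sequence \(x\mapsto(x,x)\), \((x,y)\mapsto x-y\) --- is exactly the routine check being elided, including the correct observation that downward-closedness of subcomplexes makes each span a sub-dgc.
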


Let \(y\) be the usual parametrization of~\(S^{1}\), considered as a singular \(1\)-simplex.
Choose a singular \(2\)-simplex~\(x\) in~\(D^{2}\) that
restricts to~\(y\) on the edge~\((12)\) and maps the other two edges~\((01)\) and~\((02)\) to the identity element~\(e\in S^{1}\). Then
\begin{align}
  \label{eq:y}
  d\,y &= 0, & \Delta y &= y\otimes e + e\otimes y, \\
  \label{eq:x}
  d\,x &= x(12)-x(02)+x(01) &\qquad\quad  \Delta x &= x\otimes e + x(01)\otimes x(12) + e\otimes x \\
  \notag
  &= y, &  &= x\otimes e + e\otimes x.
\end{align}
Here \(x(01)\) and~\(x(02)\) drop out because they are degenerate. For this
to hold it is crucial that we work with normalized chains.

We use the singular simplices~\(x\) and~\(y\) to define a dgc map
\begin{equation}
  \Psi(\Sigma)\colon \LC(\Sigma)\to C(\ZK(\Sigma)).
\end{equation}
For~\(m=1\) we map~\(u_{1}\mapsto x\),
\(v_{1}\mapsto y\) and~\(1\mapsto e\in C(S^{1})\);
this is well-defined by~\eqref{eq:y} and~\eqref{eq:x}. For~\(m>1\) and~\(\sigma\subset[m]\) we set
\begin{multline}
  \label{eq:def-Psi-sigma}
  \qquad
  \Psi(\sigma)\colon
  \LC(\sigma) = \bigotimes_{i=1}^{m} \LC(\restr{\sigma}{i})
  \xrightarrow{\bigotimes\Psi(\restr{\sigma}{i})} \bigotimes_{i=1}^{m} C(\ZK(\restr{\sigma}{i})) \\
  \xrightarrow{\;\;\shuffle\;\;} C\bigl(\ZK(\restr{\sigma}{1})\times\dots\times \ZK(\restr{\sigma}{m})\bigr)
  = C(\ZK(\sigma)),
  \qquad
\end{multline}
using \Cref{thm:KL-tensor} and the fact that the shuffle map is a morphism of dgcs.
In the general case \(\Psi(\Sigma)\) is determined by imposing
naturality with respect to inclusions of subcomplexes.
In other words, \(\Psi(\Sigma)\) agrees on~\(\LC(\sigma)\subset\LC(\Sigma)\) with~\(\Psi(\sigma)\),
followed by the inclusion~\(C(\ZK(\sigma))\hookrightarrow C(\ZK(\Sigma))\).

\begin{lemma}
  \label{thm:LC-CZK-quiso}
  The map~\(\Psi(\Sigma)\) is a quasi-isomorphism of dgcs.
\end{lemma}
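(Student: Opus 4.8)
The plan is to prove \Cref{thm:LC-CZK-quiso} by induction on the number of simplices of~\(\Sigma\), reducing the general case to that of a single simplex via the Mayer--Vietoris sequences of \Cref{thm:KL-MV}, and treating a single simplex by the tensor-product decomposition of \Cref{thm:KL-tensor} together with the Eilenberg--Zilber theorem.

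First I would settle the case where \(\Sigma\) is a single (full) simplex~\(\sigma\), which also covers the base case \(\sigma=\emptyset\). For \(m=1\) the map~\(\Psi\) is given explicitly, and one checks directly from~\eqref{eq:y} and~\eqref{eq:x} that it induces an isomorphism on homology: when \(1\notin\sigma\) the complex~\(\LC(\restr{\sigma}{1})\) has trivial differential and maps isomorphically onto the homology generators~\(e\),~\(y\) of~\(C(S^{1})\), while when \(1\in\sigma\) the relation \(d\,u_{1}=v_{1}\) reproduces the acyclicity of~\(D^{2}\) above degree~\(0\). For general~\(m\), the map~\(\Psi(\sigma)\) factors as in~\eqref{eq:def-Psi-sigma} through \(\bigotimes_{i}\Psi(\restr{\sigma}{i})\) followed by the shuffle map. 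Each factor is a quasi-isomorphism by the one-vertex case, and since all complexes in sight are free over~\(\kk=\Z\), the K\"unneth theorem shows their tensor product is again a quasi-isomorphism; composing with the shuffle quasi-isomorphism shows that \(\Psi(\sigma)\) is a quasi-isomorphism.

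For the inductive step, assume \(\Sigma\) contains a nonempty simplex and is not itself a full simplex, and choose a facet~\(\sigma\). Put \(\Sigma_{1}=\sigma\) and \(\Sigma_{2}=\Sigma\setminus\{\sigma\}\), so that \(\Sigma_{1}\cup\Sigma_{2}=\Sigma\) and \(\Sigma_{1}\cap\Sigma_{2}=\partial\sigma\). Here \(\Sigma_{1}=\sigma\) is a full simplex, while \(\Sigma_{2}\) and \(\partial\sigma\) each contain fewer simplices than~\(\Sigma\); in all three cases \(\Psi\) is already known to be a quasi-isomorphism, by the previous paragraph or by the inductive hypothesis. On the topological side the defining formula for~\(\ZK\) gives \(\ZK(\Sigma_{1}\cup\Sigma_{2})=\ZK(\Sigma_{1})\cup\ZK(\Sigma_{2})\) and \(\ZK(\Sigma_{1}\cap\Sigma_{2})=\ZK(\Sigma_{1})\cap\ZK(\Sigma_{2})\), whence a short exact sequence
\begin{equation*}
  0\to C(\ZK(\Sigma_{1}\cap\Sigma_{2}))\to C(\ZK(\Sigma_{1}))\oplus C(\ZK(\Sigma_{2}))\to C(\ZK(\Sigma_{1}))+C(\ZK(\Sigma_{2}))\to 0,
\end{equation*}
whose last term includes into~\(C(\ZK(\Sigma))\) as a quasi-isomorphism by the small-simplices (excision) theorem, the two subcomplexes~\(\ZK(\Sigma_{i})\) forming an excisive couple. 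Naturality of~\(\Psi\), built into its definition, shows that~\(\Psi\) maps the Mayer--Vietoris sequence of \Cref{thm:KL-MV} to this one, compatibly with all maps. Comparing the induced long exact homology sequences and applying the five lemma then shows that \(\Psi(\Sigma)\), corestricted to \(C(\ZK(\Sigma_{1}))+C(\ZK(\Sigma_{2}))\), is a quasi-isomorphism; composing with the excision quasi-isomorphism shows the same for~\(\Psi(\Sigma)\) itself.

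The main obstacle I anticipate is the bookkeeping on the topological side of the Mayer--Vietoris comparison: verifying that \(\LC(\Sigma)=\LC(\Sigma_{1})+\LC(\Sigma_{2})\) is carried by~\(\Psi\) into the subcomplex \(C(\ZK(\Sigma_{1}))+C(\ZK(\Sigma_{2}))\) rather than merely into~\(C(\ZK(\Sigma))\), and that the couple \(\{\ZK(\Sigma_{1}),\ZK(\Sigma_{2})\}\) is genuinely excisive so that the last inclusion above is a quasi-isomorphism. The first point follows once more from the naturality of~\(\Psi\) with respect to inclusions, and the second from the fact that the~\(\ZK(\Sigma_{i})\) are subcomplexes of a CW structure on~\(\ZK(\Sigma)\); once these are in place, the remaining homological algebra is routine.
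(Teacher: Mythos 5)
Your proposal is correct and follows essentially the same route as the paper: the base case of a single simplex via \Cref{thm:KL-tensor}, the K\"unneth theorem and the shuffle map, then induction on the number of simplices comparing the long exact sequence of \Cref{thm:KL-MV} with the topological Mayer--Vietoris sequence and applying the five lemma. Your extra care in choosing the specific decomposition \(\Sigma_{1}=\sigma\), \(\Sigma_{2}=\Sigma\setminus\{\sigma\}\) and in justifying excision via the sum subcomplex \(C(\ZK(\Sigma_{1}))+C(\ZK(\Sigma_{2}))\) merely spells out what the paper subsumes under the Mayer--Vietoris sequence for the CW~complex \(\ZK(\Sigma)=\ZK(\Sigma_{1})\cup\ZK(\Sigma_{2})\).
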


\begin{proof}
  The case~\(m=1\) is settled by a direct verification; we therefore assume \(m>1\)
  and proceed by induction on the size of~\(\Sigma\).
  If \(\Sigma\) has a single maximal simplex~\(\sigma\), then \(\Psi(\Sigma)=\Psi(\sigma)\)
  is a quasi-isomorphism because so are the shuffle map and, by the Künneth theorem,
  the tensor product of the maps~\(\Psi(\restr{\sigma}{i})\).

  Otherwise we can split \(\Sigma\) up into two smaller complexes~\(\Sigma_{1}\) and~\(\Sigma_{2}\)
  with intersection~\(\Sigma_{12}=\Sigma_{1}\cap\Sigma_{2}\).
  The naturality of~\(\Psi\) gives us a map between the long exact sequence corresponding
  to the short exact sequence for~\(\LC\) from \Cref{thm:KL-MV} and the Mayer--Vietoris sequence
  for the CW~complex~\(\ZK(\Sigma)=\ZK(\Sigma_{1})\cup\ZK(\Sigma_{2})\),
  \begin{equation}
    \footnotesize
    \begin{tikzcd}[column sep=1em]
      \cdots \arrow{r} & H_{*+1}(\LC(\Sigma)) \arrow{d}{\Psi_{*+1}(\Sigma)} \arrow{r}
      & H_{*}(\LC(\Sigma_{12})) \arrow{d}{\Psi_{*}(\Sigma_{12})} \arrow{r}
      & H_{*}(\LC(\Sigma_{1})) \oplus H_{*}(\LC(\Sigma_{2})) \arrow{d}{(\Psi_{*}(\Sigma_{1}),\Psi_{*}(\Sigma_{2}))} \arrow{r}
      & H_{*}(\LC(\Sigma)) \arrow{d}{\Psi_{*}(\Sigma)} \arrow{r} & \cdots \\
      \cdots \arrow{r} & H_{*+1}(\ZK(\Sigma)) \arrow{r}
      & H_{*}(\ZK(\Sigma_{12})) \arrow{r}
      & H_{*}(\ZK(\Sigma_{1})) \oplus H_{*}(\ZK(\Sigma_{2})) \arrow{r} 
      & H_{*}(\ZK(\Sigma)) \arrow{r} & \cdots \mathrlap{\;,}
    \end{tikzcd}
  \end{equation}
  where we have written \(\Psi_{*}(\Sigma)\) instead of~\(H_{*}(\Psi(\Sigma))\) etc.
  The five lemma together with induction implies
  that \(\Psi_{*}(\Sigma)\) is an isomorphism.
\end{proof}

An analogous argument shows that the inclusion map
\begin{equation}
  \label{eq:LC-KC-quiso}
  \LC(\Sigma)\hookrightarrow\KC(\Sigma)
\end{equation}
is a quasi-isomorphism of dgcs.\footnote{%
  It is also a homotopy equivalence of complexes.
  See~\cite[Lemma~7.10]{BuchstaberPanov:2002} or~\cite[Lemma~3.2.6]{BuchstaberPanov:2015} for an explicit homotopy inverse.}
If \(\Sigma\) has a single maximal simplex, we combine \Cref{thm:KL-tensor} with the Künneth theorem.
Otherwise we again write \(\Sigma=\Sigma_{1}\cup\Sigma_{2}\) and compare the long exact sequences associated
to both short exact sequences in \Cref{thm:KL-MV}.

This completes the proof of \Cref{thm:main-hom}.

\begin{remark}
  \label{rem:generalized}
  Theorems~\ref{thm:main} and~\ref{thm:main-hom} remain valid for all generalized moment-angle complexes~\(\ZK_{\Sigma}(D^{n},S^{n-1})\) with even~\(n\ge 2\),
  up to the obvious degree shifts. In particular, the generators~\(s_{i}\) and~\(t_{i}\) in~\eqref{eq:def-A} are now of degrees~\(\deg{s_{i}}=n-1\) and~\(\deg{t_{i}}=n\).
  The singular \(n\)-simplex~\(x\) is obtained by collapsing all but the last facet of the standard \(n\)-simplex to a point, and \(y\) is this last facet.
  
  If \(n\ge3\) is odd, then \(\deg{y}\) is even and \(\deg{x}\) is odd. Proceeding as before, we get a quasi-isomorphism
  between~\(C^{*}(\ZK_{\Sigma}(D^{n},S^{n-1}))\) and the cdga~\(\BCt(\Sigma)\) with generators~\(s_{i}\) of degree~\(n-1\) and~\(t_{i}=d\,s_{i}\) of degree~\(n\)
  as well as relations
  \begin{equation}
    s_{i}\,s_{i} = s_{i}\,t_{i} = 0,
    \qquad\text{and}\qquad
    t_{i_{1}}\cdots t_{i_{k}}=0 \quad \text{if \(\{i_{1},\dots,i_{k}\}\notin\Sigma\).}
  \end{equation}
  Note that the Stanley--Reisner relations are monomial and therefore independent of the order
  of the anticommuting variables~\(t_{i}\).
  
  In general, such a quasi-isomorphism does not hold for the case~\(n=1\), which we treat in the following section.
\end{remark}

\section{Real moment-angle complexes}
\label{sec:real}

It is not difficult to adapt our approach to real moment-angle complexes
\begin{equation}
  \RZK(\Sigma) = \ZK_{\Sigma}(D^{1},S^{0}) \subset (D^{1})^{m}.
\end{equation}

We start with the homological setting.
As a chain complex, we define the analogue~\(\LR(\Sigma)\) of~\(\LC(\Sigma)\) as before, except that now the degrees are
\(\deg{u_{i}} = 1\) and~\(\deg{v_{i}}=0\) for all~\(i\in[m]\).

Let us consider the case~\(m=1\) first. Writing \(u=u_{1}\) and~\(v=v_{1}\),
we turn \(\LR(\Sigma)\) into a dgc via the augmentation~\(\epsilon(v)=\epsilon(u)=0\) and the diagonal
\begin{align}
  \Delta v &= v\otimes 1 + 1\otimes v + v\otimes v, \\
  \Delta u &= u\otimes 1 + 1\otimes u + u\otimes v.
\end{align}

Let \(x\) be the canonical path from~\(e=1\) to~\(g=-1\in S^{0}\), considered as a singular \(1\)-simplex in~\(D^{1}=[-1,1]\),
and let \(y=g-e\). Then
\begin{align}
  \label{eq:d-x-y}
  &\qquad d\, x = y, \qquad\qquad d\,y = 0, \\
  \label{eq:Delta-y}
  \Delta y &= g\otimes g - e\otimes e = y\otimes e + e\otimes y + y\otimes y, \\
  \label{eq:Delta-x}
  \Delta x &= x\otimes g + e\otimes x = x\otimes e + e\otimes x + x\otimes y.
\end{align}
Given that for~\(m=1\) we either have \(\Sigma=\{\emptyset\}\) or \(\Sigma=\{\emptyset,\{1\}\}\),
one verifies directly
that the map
\begin{equation}
  \label{eq:map-L-C-real}
  \PsiR(\Sigma)\colon \LR(\Sigma) \to C(\RZK(\Sigma)),
  \qquad
  1 \mapsto e, \quad v \mapsto y, \quad u \mapsto x
\end{equation}
is a quasi-isomorphism of dgcs. (Since this map is injective, one can also use it
to justify that \(\LR(\Sigma)\) with the diagonal given above is indeed a dgc.)

For~\(m>1\) we again proceed exactly as before.
We use the isomorphism of complexes
\begin{equation}
  \LR(\sigma) = \bigotimes_{i=1}^{m} \LR(\restr{\sigma}{i})
\end{equation}
analogous to \Cref{thm:KL-tensor} to define a dgc structure on~\(\LR(\sigma)\)
for each simplex~\(\sigma\in\Sigma\) as well as a dgc map~\(\LR(\sigma)\to C(\RZK(\sigma))\).
Then we extend both the dgc structure and the map to the colimit~\(\LR(\Sigma)\) of the~\(\LR(\sigma)\) over all~\(\sigma\in\Sigma\).
The same proof as for \Cref{thm:LC-CZK-quiso} shows that the resulting map
\begin{equation}
  \PsiR(\Sigma)\colon\LR(\Sigma)\to C(\RZK(\Sigma))
\end{equation}
is a quasi-isomorphism.
In this step one uses the obvious analogue of \Cref{thm:KL-MV} for~\(\LR\) instead of~\(\LC\).

We thus have established the following.

\begin{lemma}
  The map~\(\PsiR(\Sigma)\) is a quasi-isomorphism of dgcs.
\end{lemma}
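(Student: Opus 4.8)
The plan is to follow the proof of \Cref{thm:LC-CZK-quiso} verbatim, arguing by induction on the number of simplices in~\(\Sigma\); the only genuinely new ingredient is the base case~\(m=1\). That case has already been settled: for each of the two possibilities \(\Sigma=\{\emptyset\}\) and \(\Sigma=\{\emptyset,\{1\}\}\) one checks directly from~\eqref{eq:map-L-C-real} that \(\PsiR(\Sigma)\) induces an isomorphism on homology. For \(\Sigma=\{\emptyset\}\) both sides have homology free of rank~\(2\) concentrated in degree~\(0\), and the images of~\(1\mapsto e\) and~\(v\mapsto y=g-e\) form a basis of~\(H_{0}(\RZK(\{\emptyset\}))=H_{0}(S^{0})\); for \(\Sigma=\{\emptyset,\{1\}\}\) the relation \(d\,u=v\) makes the left-hand side acyclic apart from \(H_{0}=\Z\langle1\rangle\), which matches the contractible disk \(\RZK(\{\emptyset,\{1\}\})=D^{1}\) on the right.

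For the inductive step I would distinguish the same two cases as before. If \(\Sigma\) has a single maximal simplex~\(\sigma\), then \(\PsiR(\Sigma)=\PsiR(\sigma)\) factors, by analogy with~\eqref{eq:def-Psi-sigma}, as the tensor product \(\bigotimes_{i=1}^{m}\PsiR(\restr{\sigma}{i})\) of base-case maps followed by the shuffle map~\(\shuffle\). The first map is a quasi-isomorphism by the Künneth theorem and the second by the Eilenberg--Zilber theorem, hence so is the composite. Otherwise I would write \(\Sigma=\Sigma_{1}\cup\Sigma_{2}\) as a union of two strictly smaller subcomplexes, apply the analogue of \Cref{thm:KL-MV} for~\(\LR\) to obtain a short exact sequence, and use the naturality of~\(\PsiR\) to compare the associated long exact sequence with the Mayer--Vietoris sequence of the CW~decomposition \(\RZK(\Sigma)=\RZK(\Sigma_{1})\cup\RZK(\Sigma_{2})\). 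The five lemma together with the inductive hypothesis then completes the argument.

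The step demanding real attention is the base case, and within it the verification that~\(\PsiR(\Sigma)\) is compatible with the comultiplication. Unlike the complex setting, where the exterior generators are primitive, the real diagonals carry the extra terms \(v\otimes v\) and \(u\otimes v\). These reflect the chain-level identities~\eqref{eq:Delta-y} and~\eqref{eq:Delta-x}, which appear precisely because the diagonal of a \(0\)-simplex is \(e\otimes e\) rather than \(e\otimes1+1\otimes e\) and because \(y=g-e\) is a difference of \(0\)-simplices. As remarked after~\eqref{eq:map-L-C-real}, the injectivity of~\(\PsiR(\Sigma)\) for~\(m=1\) is what justifies the coalgebra structure on~\(\LR(\Sigma)\) in the first place, so this single compatibility check does double duty. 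Once the base case is in place, the rest of the argument is purely formal and identical to the complex case.
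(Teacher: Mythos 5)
Your proposal is correct and follows essentially the same route as the paper: the paper likewise settles the base case \(m=1\) by direct verification (with the diagonal compatibility resting on the identities~\eqref{eq:Delta-y} and~\eqref{eq:Delta-x}, and the injectivity of~\(\PsiR\) justifying the coalgebra structure on~\(\LR(\Sigma)\)), and then repeats the induction from the proof of \Cref{thm:LC-CZK-quiso} word for word, using the K\"unneth theorem and the shuffle map when \(\Sigma\) has a single maximal simplex and otherwise the \(\LR\)-analogue of \Cref{thm:KL-MV} together with Mayer--Vietoris and the five lemma. Your explicit base-case homology computations (\(H_{0}\) of rank~\(2\) for \(\Sigma=\{\emptyset\}\), acyclicity via \(d\,u=v\) for \(\Sigma=\{\emptyset,\{1\}\}\)) are accurate and simply spell out what the paper leaves as a direct check.
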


\medskip

We now turn to cohomology.
The dual of the dgc~\(\LR(\Sigma)\) is the dga~\(\BR(\Sigma)\) with generators~\(s_{i}\) of degree~\(0\)
and \(t_{i}\) of degree~\(1\) satisfying the relations
\begin{gather}
  d\,s_{i}=-t_{i}, \qquad d\,t_{i} = 0, \\
  \label{eq:relations-s-t}
  s_{i}\,s_{i} = s_{i}, \qquad t_{i}\,s_{i} = t_{i}, \qquad s_{i}\,t_{i} = 0, \qquad t_{i}\,t_{i} = 0,
  \qquad \prod_{j\in\sigma} t_{j} = 0
\end{gather}
for any~\(i\in[m]\) and~\(\sigma\notin\Sigma\)
plus the rule that variables corresponding to \emph{distinct} subscripts commute in the graded sense.

The minus sign in~\(d\,s_{i} = t_{i}\) comes from the general definition
of the differential on the dual of a complex,
\cf~\eqref{eq:def-d-dual} and~\cite[eq.~(II.3.1)]{MacLane:1967}.
For~\(m=1\) we have
\begin{equation}
  (d\,s)(x) = -(-1)^{\deg{s}}\,s(d\,x) = - s(y) = -1 = -t(x).
\end{equation}
The minus sign could be removed by replacing \(t_{i}\) with~\(-t_{i}\), that is, by mapping \(u\) to~\(-x\).
The minus sign does not appear in~\cite[p.~512]{Cai:2017}
because of a different sign convention for the dual complex.

We can sum up our discussion as follows.

\begin{theorem}
  \label{thm:real-B}
  There is a quasi-isomorphisms of dgas
  \begin{equation*}
    C^{*}(\RZK(\Sigma)) \to \BR(\Sigma),
  \end{equation*}
  natural with respect to inclusions of subcomplexes.
\end{theorem}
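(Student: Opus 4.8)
The plan is to deduce the theorem from the homological lemma just established, in exactly the way \Cref{thm:main} was deduced from \Cref{thm:main-hom}. By construction the dga~\(\BR(\Sigma)\) is the graded \(\Z\)-linear dual of the dgc~\(\LR(\Sigma)\), and \(\LR(\Sigma)\) is degreewise free and finitely generated, with basis the monomials~\(u_{\sigma}\otimes v_{\tau}\). Consequently the product on~\(\BR(\Sigma)\) is the convolution product dual to the diagonal of~\(\LR(\Sigma)\), while the product on~\(C^{*}(\RZK(\Sigma))\) is the cup product dual to the diagonal of~\(C(\RZK(\Sigma))\). I would set~\(\PsiR(\Sigma)^{*}(\gamma)=\gamma\circ\PsiR(\Sigma)\) for a cochain~\(\gamma\in C^{*}(\RZK(\Sigma))\); since~\(\PsiR(\Sigma)\) commutes with the two diagonals, precomposition carries cup products to convolution products, so this defines a morphism of dgas
\[
  \PsiR(\Sigma)^{*}\colon C^{*}(\RZK(\Sigma)) \to \BR(\Sigma),
\]
the differential acquiring the sign recorded in~\eqref{eq:def-d-dual}.

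It then remains to verify that~\(\PsiR(\Sigma)^{*}\) is a quasi-isomorphism, for which I would invoke the universal coefficient theorem for cohomology. Both~\(\LR(\Sigma)\) and the normalized chain complex~\(C(\RZK(\Sigma))\) are complexes of free abelian groups concentrated in non-negative degrees, the latter being free on the non-degenerate simplices. Because the lemma guarantees that~\(\PsiR(\Sigma)\) is an isomorphism on homology, the maps it induces on both the~\(\Hom(-,\Z)\) and the~\(\operatorname{Ext}(-,\Z)\) terms of the universal coefficient sequences are isomorphisms; the naturality of those sequences together with the five lemma then forces~\(\PsiR(\Sigma)^{*}\) to be an isomorphism on cohomology. (Equivalently, a quasi-isomorphism between bounded-below complexes of free abelian groups is a chain homotopy equivalence, and~\(\Hom_{\Z}(-,\Z)\) preserves homotopy equivalences.) Naturality with respect to inclusions of subcomplexes is then automatic: \(\PsiR\) is natural by construction, and dualization is a contravariant functor.

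In truth there is no serious obstacle here, since all of the homotopy-theoretic content already sits in the preceding lemma; what remains is bookkeeping. The only point meriting attention is the identification of the dual algebra structure, namely confirming that~\(\LR(\Sigma)^{*}\) really is the dga~\(\BR(\Sigma)\) with the relations~\eqref{eq:relations-s-t} and the sign~\(d\,s_{i}=-t_{i}\). This is exactly what the discussion preceding the theorem verifies, so the proof amounts to assembling these pieces.
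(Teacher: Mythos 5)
Your proposal is correct and matches the paper's own argument: \Cref{thm:real-B} is obtained there in exactly the same way, by dualizing the dgc quasi-isomorphism~\(\PsiR(\Sigma)\) of the preceding lemma and applying the universal coefficient theorem (as was done to pass from \Cref{thm:main-hom} to \Cref{thm:main}), with the identification of the dual of~\(\LR(\Sigma)\) with~\(\BR(\Sigma)\), including the sign~\(d\,s_{i}=-t_{i}\), carried out in the surrounding discussion. The freeness and naturality points you spell out are precisely the routine bookkeeping the paper leaves implicit.
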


We in particular recover Cai's isomorphism of graded rings~\cite[Secs.~3~\&~4]{Cai:2017}
\begin{equation}
  \label{eq:HRKZ-HBSigma}
  H^{*}(\RZK(\Sigma)) = H^{*}(\BR(\Sigma)).
\end{equation}
In fact, our proof shares some similarities with Cai's.
This would be even more so
if we worked with cubical singular chains, compare~\cite{Massey:1980}.
We also remark that in the case of real moment-angle complexes
it is not necessary to pass to normalized (singular) chains.
(The shuffle map is a morphism of dgcs for non-normalized chains already,
and the formulas~\eqref{eq:d-x-y}--\eqref{eq:Delta-x} do not need normalization, either.)

We discuss the dga~\(\AR(\Sigma)\) analogous to~\(\AC(\Sigma)\) only for coefficients in~\(\Z_{2}\).
General coefficients are considered in~\cite{Franz:realtorprod}
where \Cref{thm:real-B} is furthermore extended to real toric spaces, that is,
to quotients of~\(\RZK(\Sigma)\) by freely acting subgroups of~\((\Z_{2})^{m}\).

In characteristic~\(2\), the dga~\(\AR(\Sigma)\)
has the same generators~\(s_{i}\) and~\(t_{i}\) as~\(\BR(\Sigma)\) and the relations
\begin{gather}
  d\,s_{i}=t_{i}, \qquad d\,t_{i} = 0, \\
  \label{eq:relations-A-real}
  s_{i}\,s_{i} = s_{i}, \qquad t_{i}\,s_{i} = s_{i}\,t_{i} + t_{i},
  \qquad \prod_{j\in\sigma} t_{j} = 0
\end{gather}
for~\(i\in[m]\) and~\(\sigma\notin\Sigma\), again with the additional rule
that variables corresponding to different subscripts commute. 
Observe that the ideal generated by the relations~\eqref{eq:relations-A-real} is closed under the differential,
so that \(\AR(\Sigma)\) is a well-defined dga.
The projection map~\(\AR(\Sigma)\to \BR(\Sigma)\otimes\Z_{2}\)
is again obtained by dividing out the ideal generated by the products~\(s_{i}\,t_{i}\) and~\(t_{i}^{2}\) for all~\(i\in[m]\),
and it can be seen to be a quasi-isomorphism by an argument analogous to the one given before or to~\cite[Lemma~7.10]{BuchstaberPanov:2002}.

The Stanley--Reisner ring~\(\Z_{2}[\Sigma]\), now with generators of degree~\(1\),
is contained in~\(\AR(\Sigma)\) as a sub-dga (with trivial differential). Moreover,
if \(\Sigma=[m]\) is the full simplex, then \(\AR(\Sigma)\) is the Koszul resolution of~\(\Z_{2}\) over~\(\RR=\Z_{2}[t_{1},\dots,t_{m}]\).
In general, \(\AR(\Sigma)\) is the tensor product of this resolution and~\(\Z_{2}[\Sigma]\) over~\(\RR\),
which gives the additive isomorphism
\begin{equation}
  H^{*}(\RZK(\Sigma);\Z_{2}) = \Tor_{\RR}(\Z_{2},\Z_{2}[\Sigma]).
\end{equation}
It is not multiplicative for the canonical product on the torsion product,
as can be seen for~\(\Sigma=\{\emptyset\}\) already, \cf~\cite[Sec.~10.3]{Franz:2018b}:
In this case we have \(\Z_{2}[\Sigma]=\Z_{2}\), so that the torsion product is a strictly exterior algebra.
On the other hand, \(\RZK(\Sigma)=(S^{0})^{m}\) is a finite set of points.
Hence any cohomology class on~\(\RZK(\Sigma)\) is a \(\Z_{2}\)-values function on these points and therefore squares to itself.

\section{Comparison of several product formulas}
\label{sec:compare}

The aim of this section is to relate the product formula in the cohomology of a (complex) moment-angle complex
with Baskakov's formula~\cite{Baskakov:2002} and also the formula for real moment-angle complexes
with one claimed by Gitler and López de Medrano~\cite{GitlerLopezDeMedrano:2013}
as well as the one given by Bahri--Bendersky--Cohen--Gitler~\cite{BahriEtAl:2012}
for arbitrary polyhedral products.\footnote{%
  Another cup product formula
  has given by de~Longueville~\cite[Thm.~1.1]{DeLongueville:1999}
  for complements of real coordinate subspace arrangements.
  However, his formula is incorrect in general,
  see \cite[Sec.~3]{GitlerLopezDeMedrano:2013} for a counterexample.}
We note that another description for a class of polyhedral products including
all~\(\ZK_{\Sigma}(D^{n},S^{n-1})\) has been given by Zheng~\cite[Example~7.12]{Zheng:2016}.

We start with a variant of the generalized smash moment-angle complexes introduced in~\cite[Def.~2.2]{BahriEtAl:2012}.
For a closed subset~\(A\) of a compact Hausdorff space~\(X\) and a basepoint~\(*\in A\) we define the space
\begin{equation}
  \WK(X,A) = \bigl\{\, x \in \ZK(X,A) \bigm| \text{\(x_{i}=*\) for some~\(i\in[m]\)} \,\bigr\}
\end{equation}
and based on it the pair
\begin{equation}
  \hZK_{\Sigma}(X,A) = \bigl( \ZK_{\Sigma}(X,A), \WK_{\Sigma}(X,A) \bigr).
\end{equation}
We then have an isomorphism
\begin{equation}
  H^{*}(\hZK_{\Sigma}(X,A)) = \Hc^{*}\bigl(\ZK_{\Sigma}(X,A)\setminus\WK_{\Sigma}(X,A)\bigr)
  = \Hc^{*}\bigl(\ZK_{\Sigma}(X\setminus *,A\setminus *)\bigr),
\end{equation}
where \(\Hc^{*}(-)\) denotes cohomology with compact supports, \cf~\cite[Part~I]{Massey:1978}.

We now specialize to
\begin{equation}
  \label{eq:def-RhZK}
  \RhZK(\Sigma) = \hZK_{\Sigma}(D^{1},S^{0})
\end{equation}
(where the basepoint is~\(e=1\in S^{0}\)) and observe that
\begin{multline}
  \qquad
  \ZK_{\Sigma}(D^{1},S^{0})\setminus\WK_{\Sigma}(D^{1},S^{0}) =
  \ZK_{\Sigma}\bigl(D^{1}\setminus\{e\},S^{0}\setminus\{e\}\bigr) \\ = \ZK_{\Sigma}\bigl([-1,1),\{-1\} \bigr)
  \approx \ZK_{\Sigma}\bigl([0,\infty),\{0\} \bigr) = \CU \Sigma,
  \qquad
\end{multline}
where \(\CU \Sigma\) is the unbounded cone over the simplicial complex~\(\Sigma\).
We think of~\(\Sigma\) as embedded into the hyperplane of~\(\R^{m}\) with coordinate sum equal to~\(1\).

The analysis of~\(\RZK(\Sigma)\) in the preceding section carries over to the present case.
One simply ignores the element~\(e\in S^{0}\) and the counit~\(1\) in the cochain algebra.
(Recall that the cohomology with compact supports is a ring without unit in general.)
The result is as quasi-isomorphism between the relative cochain algebra~\(C^{*}(\RhZK(\Sigma))\)
and the multiplicatively closed subcomplex~\(\hB(\Sigma)\subset \BR(\Sigma)\) spanned by all \(m\)-fold products
\begin{equation}
  \label{eq:def-a1-am}
  a_{1}\cdots a_{m}
  \qquad
  \text{where each~\(a_{i}=s_{i}\) or~\(t_{i}\).}
\end{equation}
In particular, there is a multiplicative isomorphism
\begin{equation}
  \label{eq:iso-HcCU-HhB}
  \Hc^{*}(\CU \Sigma)  = H^{*}(\CB \Sigma,\Sigma) \cong H^{*}(\hB(\Sigma))
\end{equation}
where \(\CB \Sigma\) denotes the bounded cone over~\(\Sigma\) with base~\(\Sigma\).
Not surprisingly, \(\hB(\Sigma)\) does not have a unit unless~\(\Sigma=\{\emptyset\}\).

\medbreak

We now compare \(\hB(\Sigma)\) to the dgas~\(\BC(\Sigma)\) and~\(\BR(\Sigma)\) for complex and real moment-angle complexes, respectively.
In the complex case, we have a direct sum decomposition of complexes
\begin{equation}
  \BC^{*}(\Sigma) = \bigoplus_{\alpha\subset[m]} \hB^{*-\deg{\alpha}}(\Sigma_{\alpha})
\end{equation}
where \(\Sigma_{\alpha}\) is the full subcomplex of~\(\Sigma\) on the vertex set~\(\alpha\).
This gives Hochster's formula
\begin{equation}
  \label{eq:Hochster-ZK}
  H^{*}(\ZK(\Sigma)) = \bigoplus_{\alpha\subset[m]} \Hc^{*-\deg{\alpha}}(\CU \Sigma_{\alpha})
  = \bigoplus_{\alpha\subset[m]} \tilde H^{*-\deg{\alpha}-1}(\Sigma_{\alpha}),
\end{equation}
\cf~\cite[Thm.~3.2.7]{BuchstaberPanov:2002},
where we have used the additive isomorphism
\begin{equation}
  \Hc^{*}(\CU \Sigma) = \tilde H^{*-1}(\Sigma)
\end{equation}
between the reduced cohomology of the simplicial complex~\(\Sigma\) and
the cohomology with compact supports of the unbounded cone over it.
(Recall that \(\tilde H^{-1}(\emptyset)=\kk\).)

The additive isomorphism~\eqref{eq:Hochster-ZK} can be made multiplicative in the following way:
For~\(\alpha\cap\beta\ne\emptyset\), the product
\begin{equation}
  \Hc^{*}(\CU \Sigma_{\alpha}) \otimes \Hc^{*}(\CU \Sigma_{\beta}) \to \Hc^{*}(\CU \Sigma_{\alpha\cup\beta}),
\end{equation}
vanishes. For disjoint~\(\alpha\),~\(\beta\) we use the cross product via the composition
\begin{equation}
  \label{eq:cross-prod}
  \Hc^{*}(\CU \Sigma_{\alpha}) \otimes \Hc^{*}(\CU \Sigma_{\beta})
  \xrightarrow{\times} \Hc^{*}(\CU \Sigma_{\alpha}\times\CU \Sigma_{\beta})
  \xrightarrow{\iota^{*}} \Hc^{*}(\CU \Sigma_{\alpha\cup\beta}),
\end{equation}
where \(\iota\colon\CU \Sigma_{\alpha\cup\beta}\hookrightarrow\CU \Sigma_{\alpha}\times\CU \Sigma_{\beta}\) is the canonical inclusion.
This is Baskakov's formula~\cite{Baskakov:2002}, expressed in terms of Cartesian products of cones and cohomology with compact supports
instead of joins of simplices and reduced cohomology.

\medbreak

For a real moment-angle complex we again have a direct sum decomposition
\begin{equation}
  \BR(\Sigma) = \bigoplus_{\alpha\subset[m]} \hB^{*}(\Sigma_{\alpha}),
\end{equation}
hence also a Hochster formula
\begin{equation}
  \label{eq:Hochster-RZK}
  H^{*}(\RZK(\Sigma)) = \bigoplus_{\alpha\subset[m]} \Hc^{*}(\CU \Sigma_{\alpha})
  = \bigoplus_{\alpha\subset[m]} \tilde H^{*-1}(\Sigma_{\alpha}).
\end{equation}
Note that there are no degree shifts by~\(\deg{\alpha}\) this time.
The isomorphism becomes multiplicative
if one uses the following generalization of the product~\eqref{eq:cross-prod}.

Recall that for any open subsets~\(U=X\setminus A\) and~\(V=X\setminus B\) of a compact Hausdorff space~\(X\) there is a cup product
\begin{multline}
  \qquad
  \Hc^{*}(U) \otimes \Hc^{*}(V) = H^{*}(X,A) \otimes H^{*}(X,B) \\
  \xrightarrow{\;\;\cup\;\;}
  H^{*}\bigl(X, A\cup B\bigr) = \Hc^{*}(U\cap V),
  \qquad
\end{multline}
\cf~also~\cite[Sec.~7.4]{Massey:1978}.

Now let \(\pi_{\alpha}\colon\RZK(\Sigma_{\alpha\cup\beta})\to\RZK(\Sigma_{\alpha})\) be the (proper) projection
that sends the coordinates~\(z_{i}\) with~\(i\notin\alpha\) to the basepoint~\(e\),
and analogously for~\(\pi_{\beta}\colon\RZK(\Sigma_{\alpha\cup\beta})\to\RZK(\Sigma_{\beta})\).
We define the \(*\)-product as the composition
\begin{multline}
  \label{eq:cross-prod-general}
  \Hc^{*}(\CU \Sigma_{\alpha}) \otimes \Hc^{*}(\CU \Sigma_{\beta})
  \xrightarrow{\pi_{\alpha}^{*}\otimes\pi_{\beta}^{*}}
  \Hc^{*}(\pi_{\alpha}^{-1}(\CU \Sigma_{\alpha})) \otimes \Hc^{*}(\pi_{\beta}^{-1}(\CU \Sigma_{\beta})) \\
  \xrightarrow{\;\;\cup\;\;}
  \Hc^{*}\bigl(\pi_{\alpha}^{-1}(\CU \Sigma_{\alpha}) \cap \pi_{\beta}^{-1}(\CU \Sigma_{\beta})\bigr)
  = \Hc^{*}(\CU \Sigma_{\alpha\cup\beta}).
  \qquad\qquad
\end{multline}
In term of the isomorphism~\eqref{eq:iso-HcCU-HhB}, this exactly means to multiply representatives lying in~\(\hB(\Sigma_{\alpha})\)
and~\(\hB(\Sigma_{\beta})\) inside~\(\BR(\Sigma_{\alpha\cup\beta})\), which gives elements in~\(\hB(\Sigma_{\alpha\cup\beta})\).
Note that this construction reduces to the product~\eqref{eq:cross-prod} if \(\alpha\) and~\(\beta\) are disjoint.

The \(*\)-product is visibly graded commutative,
something that was not obvious from the multiplication rules~\eqref{eq:relations-s-t}.
Looking back, we can see that these asymmetric formulas arose from the non-commutativity of the Alexander--Whitney map
and the fact that only one of the two vertices of the singular \(1\)-simplex~\(x\) in~\(X=D^{1}\)
can be the basepoint~\(e\).

The product~\eqref{eq:cross-prod-general} coincides with the \(*\)-product
given by Bahri--Bendersky--Cohen--Gitler~\cite[Thm.~1.4]{BahriEtAl:2012}
because the former map can be thought of as induced by the partial diagonal
\begin{equation}
  \hat\Delta_{I}^{J,L}\colon \hat{Z}(K_{I})\to\hat{Z}(K_{J})\wedge\hat{Z}(K_{L})
\end{equation}
defined in~\cite[eq.~(1.5)]{BahriEtAl:2012} to construct the \(*\)-product.
In our notation we have \(K=\Sigma\), \(J=\alpha\),~\(L=\beta\) and~\(I=J\cup L=\alpha\cup\beta\).
For the comparison one uses that the compactly supported cohomology of
\begin{equation}
  \CU \Sigma = \ZK_{\Sigma}(X,A)\setminus\WK_{\Sigma}(X,A)
\end{equation}
with~\((X,A)=(D^{1},S^{0})\) is equal to the reduced cohomology of the quotient
\begin{equation}
  \hat{Z}(K)=\hat{Z}(K(X,A))=\ZK_{\Sigma}(X,A) \bigm/ \WK_{\Sigma}(X,A).
\end{equation}
considered in~\cite{BahriEtAl:2012}.

As remarked earlier, the product formula in~\cite{BahriEtAl:2012} is valid for general polyhedral products.
We can recover the version for complex moment-angle complexes
if we replace the pair~\(\RhZK(\Sigma) = \hZK_{\Sigma}(D^{1},S^{0})\) from~\eqref{eq:def-RhZK} by \(\hZK_{\Sigma}(D^{2},S^{1})\).
In this case, the distinction between disjoint and non-disjoint index sets~\(\alpha\) and~\(\beta\)
in Baskakov's formula is not necessary for the corresponding \(*\)-product
because two monomials of the form~\eqref{eq:def-a1-am} with overlapping index sets
always multiply to~\(0\) in~\(\BC(\Sigma)\).

\medbreak

We finally consider another description of~\(H^{*}(\ZK(\Sigma))\) in the polytopal case.
Let \(P\) be a simple polytope with \(m\)~facets, and let \(\Sigma\) be the boundary complex of the dual simplicial polytope.
For any subset~\(\alpha\subset[m]\), let \(P_{\alpha}\subset P\) be the union of the corresponding facets.

\begin{lemma}
  There is a ring isomorphism
  \begin{equation*}
    \Theta_{\alpha}\colon H^{*}(P,P_{\alpha}) \to \Hc^{*}(\CU \Sigma_{\alpha})
  \end{equation*}
  for any~\(\alpha\subset[m]\). Moreover, the diagram
  \begin{equation*}
    \begin{tikzcd}
      H^{*}(P,P_{\alpha}) \otimes H^{*}(P,P_{\beta}) \arrow{d}{\Theta_{\alpha}\otimes\Theta_{\beta}} \arrow{r}{\cup} & H^{*}(P,P_{\alpha\cup\beta}) \arrow{d}{\Theta_{\alpha\cup\beta}} \\
      \Hc^{*}(\CU \Sigma_{\alpha}) \otimes \Hc^{*}(\CU \Sigma_{\beta}) \arrow{r}{*} & \Hc^{*}(\CU \Sigma_{\alpha\cup\beta})
    \end{tikzcd}
  \end{equation*}
  commutes for all~\(\alpha\),~\(\beta\subset[m]\).
\end{lemma}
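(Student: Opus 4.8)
The plan is to route everything through the classical duality between the simple polytope~\(P\) and the simplicial sphere~\(\Sigma=\partial P^{*}\), and then to express both products as honest cup products. First I would embed \(P\) into \(\R^{m}_{\ge 0}\) by its facet-defining affine functions, so that the \(i\)-th facet becomes \(F_{i}=P\cap\{x_{i}=0\}\) and hence \(P_{\alpha}=\{\,x\in P : x_{i}=0\text{ for some }i\in\alpha\,\}\). Writing \(U_{\alpha}=P\setminus P_{\alpha}=\{\,x\in P : x_{i}>0\text{ for all }i\in\alpha\,\}\) and using the convention of \Cref{sec:compare} that \(H^{*}(P,P_{\alpha})=\Hc^{*}(U_{\alpha})\) for the open set \(U_{\alpha}\), the relative cup product becomes the compactly supported cup product of the \(U_{\alpha}\). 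Since \(U_{\alpha}\cap U_{\beta}=U_{\alpha\cup\beta}\) and \(P_{\alpha}\cup P_{\beta}=P_{\alpha\cup\beta}\), the top map of the diagram is exactly this \(\Hc\)-cup product.

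Next I would construct, from the dual cone decomposition of~\(P\), a proper homotopy equivalence \(\Phi_{\alpha}\colon U_{\alpha}\to\CU\Sigma_{\alpha}\) that is natural in~\(\alpha\). The geometric point is that the non-compact ends of~\(U_{\alpha}\) run towards the facets \(F_{i}\) with \(i\in\alpha\), and a collection of these facets meets precisely along a simplex of~\(\Sigma_{\alpha}\); this is exactly the end structure of the unbounded cone~\(\CU\Sigma_{\alpha}\), whose ray directions are indexed by~\(\alpha\) and whose support condition is membership in~\(\Sigma_{\alpha}\). I would then define \(\Theta_{\alpha}\) as the inverse of the isomorphism \(\Phi_{\alpha}^{*}\) induced on compactly supported cohomology; this is a ring isomorphism because pullback along a proper map respects the (non-unital) \(\Hc\)-cup product. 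Additively it recovers \(\tilde H^{*-1}(\Sigma_{\alpha})\), which one can also verify directly by the same Mayer--Vietoris and five-lemma induction used for \Cref{thm:LC-CZK-quiso}, now applied to the full subcomplexes~\(\Sigma_{\alpha}\).

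For the commuting square I would exploit that the \(*\)-product~\eqref{eq:cross-prod-general} is, by its very construction, a compactly supported cup product of the two classes pulled back along the collapse maps~\(\pi_{\alpha}\) and~\(\pi_{\beta}\). The key compatibility I need is that \(\Phi\) intertwines the inclusion \(U_{\alpha\cup\beta}\hookrightarrow U_{\alpha}\) with \(\pi_{\alpha}\colon\CU\Sigma_{\alpha\cup\beta}\to\CU\Sigma_{\alpha}\), up to proper homotopy: approaching a facet~\(F_{i}\) with \(i\in\beta\setminus\alpha\) is a genuine end of~\(U_{\alpha\cup\beta}\) but only an ordinary boundary stratum of~\(U_{\alpha}\), so both the inclusion and the collapse~\(\pi_{\alpha}\) forget exactly the cone directions indexed by~\(\beta\setminus\alpha\). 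Granting this naturality, the square commutes by functoriality of cup products, since either way around computes \(\Phi_{\alpha\cup\beta}^{*}\) applied to the single cup product of the two pulled-back classes.

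The main obstacle is the compatibility underlying the last step: the collapse maps~\(\pi_{\alpha}\) are \emph{not} proper, so they do not by themselves act on compactly supported cohomology, and one must route every pullback through the preimages \(\pi_{\alpha}^{-1}(\CU\Sigma_{\alpha})\) exactly as in~\eqref{eq:cross-prod-general}. Making \(\Phi_{\alpha}\) explicit and natural enough that the square commutes, rather than merely up to an unresolved proper homotopy, is where the real work lies. I expect the cleanest resolution to bypass the geometry at this point and instead identify \(H^{*}(P,P_{\alpha})\) with \(H^{*}(\hB(\Sigma_{\alpha}))\) through the cochain model of \Cref{sec:real}, so that the \(*\)-product reduces to multiplication of representatives inside~\(\BR(\Sigma_{\alpha\cup\beta})\) and the square becomes a purely algebraic identity.
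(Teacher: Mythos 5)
Your first half is essentially sound and fillable: the identification $H^{*}(P,P_{\alpha})=\Hc^{*}(U_{\alpha})$ for $U_{\alpha}=P\setminus P_{\alpha}$, the matching of relative cup products with compactly supported ones, and the existence of a proper homotopy equivalence $U_{\alpha}\to\CU\Sigma_{\alpha}$ are all correct in outline --- the paper obtains the latter from an explicit strong deformation retraction of $(\CB\Sigma',P_{\alpha})$ onto $(\CB\Sigma_{\alpha},\Sigma_{\alpha})$ inside the barycentric subdivision $\Sigma'$, considered as a triangulation of $\partial P$, moving each vertex $v_{\sigma}$ linearly to $v_{\sigma\cap\alpha}$. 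The genuine gap is the commuting square, and you concede it yourself. ``Functoriality of cup products'' does not apply as stated: on the polytope side both classes live on the single compact space $P$ and are cupped directly, whereas the $*$-product \eqref{eq:cross-prod-general} first pulls back along two \emph{different} projections $\pi_{\alpha}$, $\pi_{\beta}$ and is routed through the compact ambient pairs precisely because the restricted cone collapses $\CU\Sigma_{\alpha\cup\beta}\to\CU\Sigma_{\alpha}$ fail to be proper, as you note. To run your argument you would need more than a proper homotopy $\Phi_{\alpha}\circ\iota\simeq\pi_{\alpha}\circ\Phi_{\alpha\cup\beta}$ for the open inclusion $\iota\colon U_{\alpha\cup\beta}\hookrightarrow U_{\alpha}$: you would need this compatibility at the level of the compact pairs through which \eqref{eq:cross-prod-general} is defined, and neither $\Phi_{\alpha}$ nor any such homotopy is actually constructed. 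Note also that $\iota$ is injective while $\pi_{\alpha}$ collapses the cone directions indexed by $\beta\setminus\alpha$, so the asserted intertwining is itself a nontrivial claim.

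Your proposed fallback is circular. Given \eqref{eq:iso-HcCU-HhB} and the observation, already in the paper, that the $*$-product is multiplication of representatives from $\hB(\Sigma_{\alpha})$ and $\hB(\Sigma_{\beta})$ inside $\BR(\Sigma_{\alpha\cup\beta})$, a multiplicative identification of $H^{*}(P,P_{\alpha})$ with $H^{*}(\hB(\Sigma_{\alpha}))$ natural enough in $\alpha$ to compare with the cup product into $H^{*}(P,P_{\alpha\cup\beta})$ \emph{is} the lemma; invoking it as the ``cleanest resolution'' leaves the hard part unproved. The paper closes exactly this hole with a concrete chain-level computation that your outline lacks: order the vertices of $\CB\Sigma_{\alpha\cup\beta}$ so that all vertices below the apex $v_{\emptyset}$ lie in $\alpha$ and all above it in $\beta$, apply the Alexander--Whitney diagonal together with the projections to $\CB\Sigma_{\alpha}$ and $\CB\Sigma_{\beta}$ (sending superfluous vertices to $v_{\emptyset}$), and evaluate on cocycles $a$, $b$ vanishing on $P_{\alpha}$, $P_{\beta}$; a simplex not containing $v_{\emptyset}$ contributes $0$ both ways around the square, and one containing it contributes $(-1)^{\deg{b}\deg{\sigma'}}\,a(\sigma')\,b(\sigma'')$ both ways, where $\sigma'$ and $\sigma''$ are the front and back faces at $v_{\emptyset}$. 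Some explicit cochain comparison of this kind is the missing core of your proof.
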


\begin{proof}
  Let \(\Sigma'\) be the barycentric subdivision of~\(\Sigma\), considered as a triangulation of~\(\partial P\).
  As a topological space, \(\Sigma_{\alpha}\) can be identified with a subcomplex of~\(\Sigma'\),
  hence \(\CB \Sigma_{\alpha}\) with a subcomplex of \(\CB \Sigma'\approx P\).
  We can also identify \(P_{\alpha}\) with the union of the closed blocks (or cells) in~\(\Sigma'\) dual to the vertices in~\(\alpha\),
  \cf~\cite[\S 64]{Munkres:84}.
  
  We claim that the canonical inclusion of pairs
  \begin{equation}
    \label{eq:map-barycentric-sudiv}
    (\CB \Sigma_{\alpha},\Sigma_{\alpha}) \to (\CB \Sigma',P_{\alpha})
  \end{equation}
  is a strong deformation retract.
  Similar to the proof of~\cite[Lemma~70.1]{Munkres:84}, we can define a strong deformation retraction
  that moves the vertex~\(v_{\sigma}\in C\,\Sigma'\) corresponding to a simplex~\(\sigma\in\Sigma\)
  to the vertex~\(v_{\sigma\cap\alpha}\in C\,\Sigma_{\alpha}\) along a straight line, which is inside~\(\sigma\) if \(\sigma\cap\alpha\ne\emptyset\).
  If \(\sigma\) has no vertex in~\(\alpha\), then \(v_{\sigma}\) is moved to the apex~\(v_{\emptyset}\) of the cone,
  and \(v_{\emptyset}\) is mapped to itself. We extend the map linearly to each simplex~\(\tau\in C\,\Sigma'\).
  If \(\tau\) is contained in~\(\sigma\in\Sigma\),
  then it is mapped to the cone over the simplex~\(\sigma\cap\alpha\in\Sigma_{\alpha}\)
  (with the empty simplex~\(\emptyset\) giving the apex).
  The deformation retraction restricts to one from~\(P_{\alpha}\) onto~\(\Sigma_{\alpha}\).
  We therefore get an isomorphism
  \begin{equation}
    \Theta_{\alpha}\colon H^{*}(P,P_{\alpha}) \to H^{*}(\CB \Sigma_{\alpha},\Sigma_{\alpha}) = \Hc^{*}(\CU \Sigma_{\alpha})
  \end{equation}
  in cohomology.

  To show that the above diagram commutes, we work on the chain level. We use simplicial chains for the left-hand side of~\eqref{eq:map-barycentric-sudiv},
  which canonically map to singular chains on the right. We choose a vertex ordering for~\(\CB \Sigma_{\alpha\cup\beta}\) such that
  all vertices smaller than the apex~\(v_{\emptyset}\) are in~\(\alpha\) and all greater ones in~\(\beta\). (Some may be in both.)
  To a simplex~\(\sigma\in \CB \Sigma_{\alpha\cup\beta}\)
  we have to apply the Alexander--Whitney diagonal and possibly the projections from~\(\CB \Sigma_{\alpha\cup\beta}\) to~\(\CB \Sigma_{\alpha}\)
  and~\(\CU \Sigma_{\beta}\), which send ``superfluous'' vertices to~\(v_{\emptyset}\). Afterwards we evaluate the resulting tensor product on~\(a\otimes b\)
  where \(a\),~\(b\in C(P)\) are cocycles vanishing on~\(P_{\alpha}\) and~\(P_{\beta}\), respectively.
  
  Because of the way we have ordered the simplices, the following happens:
  If \(\sigma\) does not contain \(v_{\emptyset}\), then the result is \(0\) for both ways of going through the diagram.
  Otherwise we obtain \((-1)^{\deg{b}\deg{\sigma'}}\,a(\sigma')\,b(\sigma'')\) for both ways where \(\sigma'\)
  is the front face of~\(\sigma\) ending in~\(v_{\emptyset}\) and \(\sigma''\) the back face starting there.
  Hence the diagram commutes in either case.
\end{proof}

As a consequence, we get a ring isomorphism
\begin{equation}
  H^{*}(\RZK(P)) = \bigoplus_{\alpha\subset[m]} H^{*}(P,P_{\alpha})
\end{equation}
where the multiplication on the right-hand side is given by the cup products
\begin{equation}
  H^{*}(P,P_{\alpha}) \otimes H^{*}(P,P_{\beta}) \to H^{*}(P,P_{\alpha\cup\beta})
\end{equation}
for all~\(\alpha\),~\(\beta\subset[m]\).
This description of the cohomology ring of a real moment-angle manifold
was stated without proof by Gitler and López de Medrano~\cite[p.~1526]{GitlerLopezDeMedrano:2013}.\footnote{%
In~\cite[p.~489]{LopezDeMedrano:2021}, López de Medrano writes that
``at the end [of~\cite{GitlerLopezDeMedrano:2013}] we announced, prematurely,
a formula for the cohomology ring of any~[\(\RZK(P)\)], but the proof ran into some technical problems''.}

The Alexander-dual description for moment-angle manifolds,
\begin{equation}
  H^{*}(\ZK(\Sigma)) = \bigoplus_{\alpha\subset[m]} \tilde H_{d+m-\deg{\alpha}-*}(P_{\alpha})
\end{equation}
where~\(d=\dim P-1\), has been provided by Bosio--Meersseman~\cite[Thm.~10.1]{BosioMeersseman:2006},
with the product given up to sign by the intersection products
\begin{equation}
  \tilde H_{d-k}(P_{\alpha}) \otimes \tilde H_{d-l}(P_{\beta})
  \to \tilde H_{d-(k+l)}(P_{\alpha\cap\beta})
\end{equation}
for~\(\alpha\),~\(\beta\subset[m]\) with~\(\alpha\cup\beta=[m]\) and~\(k\),~\(l\ge0\).

\end{document}